\newcommand\DOI[2]{DOI: \href{#1#2}{#2}}
\newtheorem{theorem}{Theorem}[section]
\newtheorem{corollary}[theorem]{Corollary}
\theoremstyle{remark}
\theoremstyle{definition}
\newtheorem{definition}{Definition}[section]
\definecolor{nred}{HTML}{E8170C}
\definecolor{norange}{HTML}{F09F18}
\definecolor{npurple}{HTML}{8500EB}
\definecolor{ngreen}{HTML}{06C753}
\definecolor{nblue}{HTML}{049BFF}
\def\lr#1{\multicolumn{1}{|@{}c@{}}{#1}}
\def\rr#1{\multicolumn{1}{@{}c@{}|}{#1}}
\def\lrr#1{\multicolumn{1}{|@{}c@{}|}{#1}}
\def\nc#1#2{{\color{#1}\ensuremath{\,#2\,}}}
\def\dc#1#2{\colorbox{#1}{\color{white}\ensuremath{#2}}}
\def\AC{\ensuremath{\mathbf{AC}}}
\def\L{\ensuremath{\mathbf{L}}}
\def\KW{\ensuremath{\mathbf{K_w}}}
\def\NC{\ensuremath{\mathbf{NC}}}
\def\ACT{\ensuremath{{\mathbf{AC}_2}}}
\def\FDE{\ensuremath{\mathbf{FDE}}}
\def\FC{\ensuremath{\mathbf{FC}}}
\def\ncf{\mathfrak{f}}
\def\ncu{\mathfrak{u}}
\def\nct{\mathfrak{t}}
\def\acf{\mathbf{\scriptstyle f}}
\def\acb{\mathbf{\scriptstyle b}}
\def\acn{\mathbf{\scriptstyle n}}
\def\act{\mathbf{\scriptstyle t}}
\def\fdef{\mathbf{\scriptstyle F}}
\def\fdeb{\mathbf{\scriptstyle B}}
\def\fden{\mathbf{\scriptstyle N}}
\def\fdet{\mathbf{\scriptstyle T}}
\def\ncff{\nc{nred}{\ncf\ncf}}
\def\ncfu{\nc{norange}{\ncf\ncu}}
\def\ncft{\nc{ngreen}{\ncf\nct}}
\def\ncuf{\nc{npurple}{\ncu\ncf}}
\def\ncuu{\nc{darkgray}{\ncu\ncu}}
\def\ncut{\nc{nblue}{\ncu\nct}}
\def\nctf{\dc{ngreen}{\nct\ncf}}
\def\nctu{\dc{nblue}{\nct\ncu}}
\def\nctt{\dc{nred}{\nct\nct}}
\def\acNn{\nc{nred}{\fden\acn}}
\def\acFNf{\nc{norange}{\fdef\fden\acf}}
\def\acFn{\nc{ngreen}{\fdef\acn}}
\def\acNTt{\nc{npurple}{\fden\fdet\act}}
\def\acBFNTb{\nc{darkgray}{\fdeb{\cdot}\fden\acb}}
\def\acBtFt{\nc{nblue}{\fdeb\fdef\act}}
\def\acTn{\dc{ngreen}{\fdet\acn}}
\def\acBTf{\dc{nblue}{\fdeb\fdet\acf}}
\def\acBn{\dc{nred}{\fdeb\acn}}
\def\fcnn{\nc{nred}{\fden\acn}}
\def\fcff{\nc{norange}{\fdef\acf}}
\def\fcnf{\nc{norange}{\fden\acf}}
\def\fcfn{\nc{ngreen}{\fdef\acn}}
\def\fcnt{\nc{npurple}{\fden\act}}
\def\fctt{\nc{npurple}{\fdet\act}}
\def\fcfb{\nc{darkgray}{\fdef\acb}}
\def\fcnb{\nc{darkgray}{\fden\acb}}
\def\fcbb{\nc{darkgray}{\fdeb\acb}}
\def\fctb{\nc{darkgray}{\fdet\acb}}
\def\fcft{\nc{nblue}{\fdef\act}}
\def\fcbt{\nc{nblue}{\fdeb\act}}
\def\fctn{\dc{ngreen}{\fdet\acn}}
\def\fcbf{\dc{nblue}{\fdeb\acf}}
\def\fctf{\dc{nblue}{\fdet\acf}}
\def\fcbn{\dc{nred}{\fdeb\acn}}
\def\dom{\mathrm{dom}}
\def\ran{\mathrm{ran}}
\begin{document}
\LaLPonline{1} \setcounter{page}{1} \thispagestyle{LLProbezissn} \label{p}


\AuthorTitle{Richard Zach}{An Epimorphism between Fine and Ferguson's Matrices for Angell's~\AC}

\allowdisplaybreaks

\Abstract{Angell's logic of analytic containment \AC{} has been shown to be
characterized by a $9$-valued matrix \NC{} by Ferguson, and by a
$16$-valued matrix by Fine.  It is shown that the former is the image
of a surjective homomorphism from the latter, i.e., an epimorphic
image. Some candidate $7$-valued matrices are ruled out as
characteristic of~\AC. Whether matrices with fewer than $9$ values
exist remains an open question. The results were obtained with the
help of the MUltlog system for investigating finite-valued logics; the
results serve as an example of the usefulness of techniques from
computational algebra in logic. A tableau proof system for~\NC{} is
also provided.}

\Keywords{analytic containment; many-valued logic; matrix congruence;
tableau calculus; computational algebra}

\section{Introduction}

\citet{Ferguson2016} and \citet{Fine2016} have independently provided
many-valued matrices which characterize the logic of analytic
containment~\AC{} of \citet{Angell1977,Angell1989}.  Ferguson's
matrix~\NC{} has nine truth values, while Fine's matrix \FC{} has 16.
\citet[fn.~1 on p.~200]{Fine2016} suggested that it would be of interest to
compare the two. We show below that \NC{} is an epimorphic image
of~\FC, i.e., \NC{} is isomorphic to a factor matrix of~\FC.

Ferguson's \NC{} is defined as follows. We start with weak Kleene
logic, which has three truth values $K = \{\ncf, \ncu, \nct\}$. The
truth tables for $\land$ and $\lor$ are familiar:
\[\begin{array}[t]{c|ccc}\\
   \land^{\mathbf{K}_w} & \ncf & \ncu & \nct\\ \hline
   \ncf & \ncf & \ncu & \ncf\\
   \ncu & \ncu & \ncu & \ncu\\
   \nct & \ncf & \ncu & \nct\\
   \end{array}\qquad
   \begin{array}[t]{c|ccc}\\
    \lor^{\mathbf{K}_w} & \ncf & \ncu & \nct\\ \hline
   \ncf & \ncf & \ncu & \nct\\
   \ncu & \ncu & \ncu & \ncu\\
   \nct & \nct & \ncu & \nct\\
   \end{array}\]
\NC{} has the truth values $\mathit{NC} = K \times K$, of which
$\mathit{NC}^+ = \{\nct\} \times K$ are designated, with the
truth functions defined by:
\begin{align*}
\lnot^\NC(\langle v_0, v_1\rangle) & = \langle v_1, v_0\rangle\\
\land^\NC(\langle v_0, v_1\rangle, \langle v_0', v_1'\rangle) & =
    \langle \land^\KW(v_0, v_0'), \land^\KW(v_1, v_1')\rangle\\
\lor^\NC(\langle v_0, v_1\rangle, \langle v_0', v_1'\rangle) & =
    \langle \lor^\KW(v_0, v_0'), \lor^\KW(v_1, v_1')\rangle
\end{align*}
That is, $\land$ and $\lor$ are defined component-wise (as in the
direct product of two matrices). However, $\lnot$ is not the
component-wise weak Kleene negation, but instead switches the truth
values in a pair.

Dunn and Belnap's matrix for  \FDE{} has the four truth values $\mathit{FDE} = \{\fdeb,
\fdet, \fdef, \fden\}$ with designated values $\mathit{FDE}^+ = \{\fdeb,
\fdet\}$ and truth functions as follows:\footnote{We use $\fdeb, \fdet,
\fdef, \fden$ instead of Fine's TF, T$\!\not\!\mathrm{F}$,
$\not\!\mathrm{T}$F, $\not\!\mathrm{T}${}$\!\not\!\mathrm{F}$ to save space.}
\[\begin{array}[t]{c|c}
\lnot^\FDE &\\
\hline
\fdeb & \fdeb\\
\fdet & \fdef\\
\fdef & \fdet\\
\fden & \fden
\end{array}\qquad
\begin{array}[t]{c|cccc}
\land^\FDE & \fdeb & \fdet & \fdef & \fden\\ \hline
\fdeb & \fdeb & \fdeb & \fdef & \fdef\\
\fdet & \fdeb & \fdet & \fdef & \fden\\
\fdef & \fdef & \fdef & \fdef & \fdef\\
\fden & \fdef & \fden & \fdef & \fden
\end{array}\qquad
\begin{array}[t]{c|cccc}
\lor^\FDE & \fdeb & \fdet & \fdef & \fden\\ \hline
\fdeb & \fdeb & \fdet & \fdeb & \fdet\\
\fdet & \fdet & \fdet & \fdet & \fdet\\
\fdef & \fdeb & \fdet & \fdef & \fden\\
\fden & \fdet & \fdet & \fden & \fden
\end{array}\]

Fine's matrix is the direct product of \FDE{} with another
matrix~\ACT. \ACT{} has four truth values $\mathit{AC}_2 = \{\acb,
\act, \acf, \acn\}$ with designated values $\mathit{AC}_2^+ = \{\acf,
\acn\}$ and truth functions as follows:\footnote{Again, $\acb, \act,
\acf, \acn$ correspond to Fine's tf, t$\!\not\mathrm{f}$,
$\not\mathrm{t}$f, $\not\mathrm{t}${}$\!\not\mathrm{f}$.}
\[\begin{array}[t]{c|c}
\lnot^\ACT\\
\hline
\acb & \acb\\
\act & \acf\\
\acf & \act\\
\acn & \acn
\end{array}
\qquad\begin{array}[t]{c|cccc}
\land^\ACT & \acb & \act & \acf & \acn\\ \hline
\acb & \acb & \acb & \acb & \acb\\
\act & \acb & \act & \acb & \act\\
\acf & \acb & \acb & \acf & \acf\\
\acn & \acb & \act & \acf & \acn
\end{array}
\qquad
\begin{array}[t]{c|cccc}
\lor^\ACT & \acb & \act & \acf & \acn\\ \hline
\acb & \acb & \acb & \acb & \acb\\
\act & \acb & \act & \acb & \act\\
\acf & \acb & \acb & \acf & \acf\\
\acn & \acb & \act & \acf & \acn
\end{array}\]
(so the truth functions for $\land$ and $\lor$ are identical).

The resulting matrix $\FC = \FDE \times \ACT$ has sixteen truth values and
four designated values $\mathit{FC}^+ = \mathit{FDE}^+ \times
\mathit{AC}_2^+ = \{\fdet\acn,\fdeb\acf,\fdet\acf,\fdeb\acn\}$.

\section{Homomorphisms of matrices}

A function $f\colon L \to L'$ from the truth values of a
matrix~\textbf{L} to those of a matrix~$\textbf{L}'$ is a \emph{strong
homomorphism} if it respects truth functions and designated values,
that is,
\begin{enumerate}
    \item $f(\Box^\L(v_1, \dots, v_n)) =
    \Box^{\L'}(f(v_1), \dots, f(v_n))$, and
    \item $v \in L^+$ iff $f(v) \in L^+$.
\end{enumerate}
A surjective homomorphism is called an \emph{epimorphism}, and a bijective homomorphism an \emph{isomorphism}.

The following facts are elementary results of universal algebra or can
easily be verified (see, e.g., \citealt{Gratzer1968,Wojcicki1988}):
\begin{enumerate}
    \item A homomorphism $f$ induces a partition of~$L$ consisting of
    the sets $[v]_f = \{v' \mid f(v) = f(v')\}$.
    \item The equivalence relation~$\equiv_f$ on~$L$ corresponding to the
    partition induced by a homomorphism~$f$ is a congruence
    of~\textbf{L}, that is:
    \begin{enumerate}
    \item If $v_1 \equiv_f v_1'$, \dots, $v_n \equiv_f v_n'$ then
    $\Box^\L(v_1, \dots, v_n) \equiv_f \Box^\L(v_1',
    \dots, v_n')$, and
    \item if $v \equiv_f v'$ then $v \in L^+$ iff $v' \in L^+$.
    \end{enumerate}
    \item If $\equiv$ is a congruence on \L{}, then $f\colon L \to
    L/{\equiv}$ defined by $f(v) = [v]_f$ is an epimorphism of
    \L{} to the factor matrix~$\L/{\equiv}$.
    \item Every epimorphism $h\colon \L \to \L'$ factors into the
    epimorphism $f\colon \L \to \L/{\equiv_h}$ and an isomorphism
    $g\colon \L/{\equiv_h} \to \L'$.
    \item If $f\colon\L \to \L'$ is an epimorphism then the
    consequence relations of \L{} and~$\L'$ (defined via preservation
    of designated values) agree, i.e., $\Delta \vDash_{\L} \varphi$ iff
    $\Delta \vDash_{\L'} \varphi$.
\end{enumerate}

\section{The epimorphism of \FC{} to \NC}

In light of the above, to show that \NC{} is an epimorphic image
of~\FC, and hence that $\vDash_\NC$ coincides with $\vDash_\FC$, it
suffices to find a congruence relation~$\equiv$ of \FC{} and an
isomorphism between $\FC/{\equiv}$ and~\NC.

\FC{} has two non-trivial congruences, namely:
\begin{align*}
    & \fdeb\act, \fdef\acf, \fdef\acn, \fdef\act, \fden\acf,
    \fden\acn, \fden\act, \fdeb\acb \equiv \fdef\acb \equiv \fden\acb
    \equiv \fdet\acb, \fdet\act, \fdeb\acf, \fdeb\acn, \fdet\acf,
    \fdet\acn \\
    & \fden\acn, \fdef\acf \equiv \fden\acf, \fdef\acn, \fden\act
    \equiv
    \fdet\act, \fdeb\acb \equiv \fdef\acb \equiv \fden\acb \equiv
    \fdet\acb, \fdeb\act \equiv \fdef\act, \fdet\acn, \fdeb\acf \equiv
    \fdet\acf, \fdeb\acn
\end{align*}
The latter has nine equivalence classes, of which the three classes
$\{\fdet\acn\}$, $\{\fdeb\acf,\fdet\acf\}$, and $\{\fdeb\acn\}$ are
designated.  And in fact, the following mapping is an isomorphism
between \NC{} and $\FC/{\equiv}$:
\[\begin{array}{c|ccccccccc}
v &
\ncf\ncf & \ncf\ncu & \ncf\nct & \ncu\ncf & \ncu\ncu & \ncu\nct &
\nct\ncf & \nct\ncu & \nct\nct\\
g(v) &
\fden\acn & \fdef\acf, \fden\acf & \fdef\acn & \fden\act, \fdet\act &
\fdeb\acb, \fdef\acb,  & \fdeb\act, \fdef\act &
\fdet\acn & \fdeb\acf, \fdet\acf & \fdeb\acn\\
& & & & & \fden\acb, \fdet\acb\\
\hline
& & \fdef\fden\acf & & \fden\fdet\act & \fdeb{\cdot}\fden\acb &
\fdeb\fdef\act & & \fdeb\fdet\acf
\end{array}
\] The last line introduces abbreviations for the congruence
classes of~$\FC/{\equiv}$ consisting of more than one truth value. We
use these abbreviations merely to save space in tables \ref{table0}
and~\ref{table3} below.

The verification of the facts that $\equiv$ is a congruence
on~$\FC$ and that $g$ is an isomorphism between \NC{} and
$\FC/{\equiv}$ would be extremely tedious. We can make them
immediately apparent, however, by displaying the truth tables for each
logic in full, with truth values that are isomorphic or equivalent
shown in the same color, and by arranging the truth values in
corresponding order. For instance, the truth tables for $\lnot$ in
\NC, $\FC/{\equiv}$ and \FC, respectively, are given in
table~\ref{table0}.
\begin{table}
\def\mystrut{\rule[-.3ex]{0pt}{2.5ex}}
\[\begin{array}[t]{c|c}
\lnot^\NC & \\
\hline
\ncff & \ncff\\
\ncfu & \ncuf\\
\ncft & \nctf\\
\ncuf & \ncfu\\
\ncuu & \ncuu\\
\ncut & \nctu\\
\nctf & \ncft\\
\nctu & \ncut\\
\nctt & \nctt\\
\end{array}\qquad
\begin{array}[t]{c|c}
\lnot^{\FC/{\equiv}} & \\
\hline
\mystrut\acNn & \acNn\\
\mystrut\acFNf & \acNTt\\
\mystrut\acFn & \acTn\\
\mystrut\acNTt & \acFNf\\
\mystrut\acBFNTb & \acBFNTb\\
\mystrut\acBtFt & \acBTf\\
\mystrut\acTn & \acFn\\
\mystrut\acBTf & \acBtFt\\
\mystrut\acBn & \acBn\\
\end{array}
\qquad
\begin{array}[t]{c|c}
\lnot^{\FC} &\\
\hline
\fcnn & \fcnn\\
\fcff & \fctt\\
\fcnf & \fcnt\\
\fcfn & \fctn\\
\fcnt & \fcnf\\
\fctt & \fcff\\
\fcfb & \fctb\\
\fcnb & \fcnb\\
\fcbb & \fcbb\\
\fctb & \fcfb\\
\fcft & \fctf\\
\fcbt & \fcbf\\
\fctn & \fcfn\\
\fcbf & \fcbt\\
\fctf & \fcft\\
\fcbn & \fcbn\\
\end{array}
\] 
\caption{$\lnot$ in \NC, $\FC/{\equiv}$, and \FC}
\label{table0}
\end{table}
Reversed colors indicate the
designated values. Compare the tables for $\land$ in \NC{} and \FC{}
in table~\ref{table1}, and those for $\lor$ in \NC{} and \FC{} in
table~\ref{table2}.

\begin{table}
\[\begin{array}{c|c@{\,}c@{\,}c@{\,}c@{\,}c@{\,}c@{\,}c@{\,}c@{\,}c}\\
    \land^{\NC}  & \ncff & \ncfu & \ncft & \ncuf & \ncuu & \ncut & \nctf & \nctu & \nctt\\ \hline
\ncff & \ncff & \ncfu & \ncft & \ncuf & \ncuu & \ncut & \ncff & \ncfu & \ncft\\
\ncfu & \ncfu & \ncfu & \ncfu & \ncuu & \ncuu & \ncuu & \ncfu & \ncfu & \ncfu\\
\ncft & \ncft & \ncfu & \ncft & \ncut & \ncuu & \ncut & \ncft & \ncfu & \ncft\\
\ncuf & \ncuf & \ncuu & \ncut & \ncuf & \ncuu & \ncut & \ncuf & \ncuu & \ncut\\
\ncuu & \ncuu & \ncuu & \ncuu & \ncuu & \ncuu & \ncuu & \ncuu & \ncuu & \ncuu\\
\ncut & \ncut & \ncuu & \ncut & \ncut & \ncuu & \ncut & \ncut & \ncuu & \ncut\\
\nctf & \ncff & \ncfu & \ncft & \ncuf & \ncuu & \ncut & \nctf & \nctu & \nctt\\
\nctu & \ncfu & \ncfu & \ncfu & \ncuu & \ncuu & \ncuu & \nctu & \nctu & \nctu\\
\nctt & \ncft & \ncfu & \ncft & \ncut & \ncuu & \ncut & \nctt & \nctu & \nctt\\
\end{array}\]
\[\begin{array}{c|c@{\,}c@{\,}c@{\,}c@{\,}c@{\,}c@{\,}c@{\,}c@{\,}c@{\,}c@{\,}c@{\,}c@{\,}c@{\,}c@{\,}c@{\,}c}
    \cline{3-4}\cline{6-7}\cline{8-11}\cline{12-13}\cline{15-16}
    \land^{\FC} 
    & \fcnn & \lr\fcff & \rr\fcnf & \fcfn & \lr\fcnt & \rr\fctt & \lr\fcfb & \fcnb &
    \fcbb & \rr\fctb & \fcft & \rr\fcbt & \fctn & \lr\fcbf & \rr\fctf & \fcbn\\ 
    \cline{3-4}\cline{6-7}\cline{8-11}\cline{12-13}\cline{15-16}
    \\[-2ex]\hline\\[-2ex]
    \cline{3-4}\cline{6-7}\cline{8-11}\cline{12-13}\cline{15-16}
\fcnn & \fcnn & \lr\fcff & \rr\fcnf & \fcfn & \lr\fcnt & \rr\fcnt &
\lr\fcfb & \fcnb & \fcfb & \rr\fcnb & \lr\fcft & \rr\fcft & \fcnn &
\lr\fcff & \rr\fcnf & \fcfn\\
\hline
\lrr\fcff & \lrr\fcff & \lr\fcff & \rr\fcff & \lrr\fcff & \lr\fcfb & \rr\fcfb &
\lr\fcfb & \fcfb & \fcfb & \rr\fcfb & \lr\fcfb & \rr\fcfb & \lrr\fcff & \lr\fcff & \rr\fcff
& \lrr\fcff\\
\lrr\fcnf & \lrr\fcnf & \lr\fcff & \rr\fcnf & \lrr\fcff & \lr\fcnb & \rr\fcnb &
\lr\fcfb & \fcnb & \fcfb & \rr\fcnb & \lr\fcfb & \rr\fcfb & \lrr\fcnf & \lr\fcff & \rr\fcnf
& \lrr\fcff\\
\hline
\fcfn & \fcfn & \lr\fcff & \rr\fcff & \fcfn & \lr\fcft & \rr\fcft &
\lr\fcfb & \fcfb & \fcfb & \rr\fcfb & \lr\fcft & \rr\fcft & \fcfn &
\lr\fcff & \rr\fcff & \fcfn\\
\hline
\lrr\fcnt & \lrr\fcnt & \lr\fcfb & \rr\fcnb & \lrr\fcft & \lr\fcnt & \rr\fcnt & \lr\fcfb & \fcnb & \fcfb & \rr\fcnb & \lr\fcft & \rr\fcft & \lrr\fcnt & \lr\fcfb & \rr\fcnb & \lrr\fcft\\
\lrr\fctt & \lrr\fcnt & \lr\fcfb & \rr\fcnb & \lrr\fcft & \lr\fcnt & \rr\fctt & \lr\fcfb & \fcnb & \fcbb & \rr\fctb & \lr\fcft & \rr\fcbt & \lrr\fctt & \lr\fcbb & \rr\fctb & \lrr\fcbt\\\hline
\lrr\fcfb & \lrr\fcfb & \lr\fcfb & \rr\fcfb & \lrr\fcfb & \lr\fcfb & \rr\fcfb & \lr\fcfb & \fcfb & \fcfb & \rr\fcfb & \lr\fcfb & \rr\fcfb & \lrr\fcfb & \lr\fcfb & \rr\fcfb & \lrr\fcfb\\
\lrr\fcnb & \lrr\fcnb & \lr\fcfb & \rr\fcnb & \lrr\fcfb & \lr\fcnb & \rr\fcnb & \lr\fcfb & \fcnb & \fcfb & \rr\fcnb & \lr\fcfb & \rr\fcfb & \lrr\fcnb & \lr\fcfb & \rr\fcnb & \lrr\fcfb\\
\lrr\fcbb & \lrr\fcfb & \lr\fcfb & \rr\fcfb & \lrr\fcfb & \lr\fcfb & \rr\fcbb & \lr\fcfb & \fcfb & \fcbb & \rr\fcbb & \lr\fcfb & \rr\fcbb & \lrr\fcbb & \lr\fcbb & \rr\fcbb & \lrr\fcbb\\
\lrr\fctb & \lrr\fcnb & \lr\fcfb & \rr\fcnb & \lrr\fcfb & \lr\fcnb & \rr\fctb & \lr\fcfb & \fcnb & \fcbb & \rr\fctb & \lr\fcfb & \rr\fcbb & \lrr\fctb & \lr\fcbb & \rr\fctb & \lrr\fcbb\\\hline
\lrr\fcft & \lrr\fcft & \lr\fcfb & \rr\fcfb & \lrr\fcft & \lr\fcft & \rr\fcft & \lr\fcfb & \fcfb & \fcfb & \rr\fcfb & \lr\fcft & \rr\fcft & \lrr\fcft & \lr\fcfb & \rr\fcfb & \lrr\fcft\\
\lrr\fcbt & \lrr\fcft & \lr\fcfb & \rr\fcfb & \lrr\fcft & \lr\fcft & \rr\fcbt & \lr\fcfb & \fcfb & \fcbb & \rr\fcbb & \lr\fcft & \rr\fcbt & \lrr\fcbt & \lr\fcbb & \rr\fcbb & \lrr\fcbt\\\hline
\fctn & \fcnn & \lr\fcff & \rr\fcnf & \fcfn & \lr\fcnt & \rr\fctt & \lr\fcfb & \fcnb & \fcbb & \rr\fctb & \lr\fcft & \rr\fcbt & \fctn & \lr\fcbf & \rr\fctf & \fcbn\\\hline
\lrr\fcbf & \lrr\fcff & \lr\fcff & \rr\fcff & \lrr\fcff & \lr\fcfb & \rr\fcbb & \lr\fcfb & \fcfb & \fcbb & \rr\fcbb & \lr\fcfb & \rr\fcbb & \lrr\fcbf & \lr\fcbf & \rr\fcbf & \lrr\fcbf\\
\lrr\fctf & \lrr\fcnf & \lr\fcff & \rr\fcnf & \lrr\fcff & \lr\fcnb & \rr\fctb & \lr\fcfb & \fcnb & \fcbb & \rr\fctb & \lr\fcfb & \rr\fcbb & \lrr\fctf & \lr\fcbf & \rr\fctf & \lrr\fcbf\\\hline
\fcbn & \fcfn & \lr\fcff & \rr\fcff & \fcfn & \lr\fcft & \rr\fcbt &
\lr\fcfb & \fcfb & \fcbb & \rr\fcbb & \lr\fcft & \rr\fcbt & \fcbn &
\lr\fcbf & \rr\fcbf & \fcbn\\
\cline{3-4}\cline{6-7}\cline{8-11}\cline{12-13}\cline{15-16}
\end{array}\]
\caption{$\land$ in \NC{} and \FC. Values in any rectangle are
equivalent and correspond to a single value in \NC.}
\label{table1}
\end{table}

\begin{table}
\[\begin{array}{c|c@{\,}c@{\,}c@{\,}c@{\,}c@{\,}c@{\,}c@{\,}c@{\,}c}\\
\lor^{\NC} & \ncff & \ncfu & \ncft & \ncuf & \ncuu & \ncut & \nctf & \nctu & \nctt\\ \hline
\ncff & \ncff & \ncfu & \ncff & \ncuf & \ncuu & \ncuf & \nctf & \nctu & \nctf\\
\ncfu & \ncfu & \ncfu & \ncfu & \ncuu & \ncuu & \ncuu & \nctu & \nctu & \nctu\\
\ncft & \ncff & \ncfu & \ncft & \ncuf & \ncuu & \ncut & \nctf & \nctu & \nctt\\
\ncuf & \ncuf & \ncuu & \ncuf & \ncuf & \ncuu & \ncuf & \ncuf & \ncuu & \ncuf\\
\ncuu & \ncuu & \ncuu & \ncuu & \ncuu & \ncuu & \ncuu & \ncuu & \ncuu & \ncuu\\
\ncut & \ncuf & \ncuu & \ncut & \ncuf & \ncuu & \ncut & \ncuf & \ncuu & \ncut\\
\nctf & \nctf & \nctu & \nctf & \ncuf & \ncuu & \ncuf & \nctf & \nctu & \nctf\\
\nctu & \nctu & \nctu & \nctu & \ncuu & \ncuu & \ncuu & \nctu & \nctu & \nctu\\
\nctt & \nctf & \nctu & \nctt & \ncuf & \ncuu & \ncut & \nctf & \nctu & \nctt\\
\end{array}\]
\[\begin{array}{c|c@{\,}c@{\,}c@{\,}c@{\,}c@{\,}c@{\,}c@{\,}c@{\,}c@{\,}c@{\,}c@{\,}c@{\,}c@{\,}c@{\,}c@{\,}c}\\
    \cline{3-4}\cline{6-7}\cline{8-11}\cline{12-13}\cline{15-16}
\lor^{\FC} & \fcnn & \lr\fcff & \rr\fcnf & \fcfn & \lr\fcnt & \rr\fctt & \lr\fcfb &
\fcnb & \fcbb & \rr\fctb & \lr\fcft & \rr\fcbt & \fctn & \lr\fcbf & \rr\fctf &
\fcbn\\ 
\cline{3-4}\cline{6-7}\cline{8-11}\cline{12-13}\cline{15-16}
\\[-2ex]\hline\\[-2ex]
\cline{3-4}\cline{6-7}\cline{8-11}\cline{12-13}\cline{15-16}
\fcnn & \fcnn & \lr\fcnf & \rr\fcnf & \fcnn & \lr\fcnt & \rr\fctt &
\lr\fcnb & \fcnb & \fctb & \rr\fctb & \lr\fcnt & \rr\fctt & \fctn &
\lr\fctf & \rr\fctf & \fctn\\ \hline
\lrr\fcff & \lrr\fcnf & \lr\fcff & \rr\fcnf & \lrr\fcff & \lr\fcnb & \rr\fctb & \lr\fcfb & \fcnb & \fcbb & \rr\fctb & \lr\fcfb & \rr\fcbb & \lrr\fctf & \lr\fcbf & \rr\fctf & \lrr\fcbf\\
\lrr\fcnf & \lrr\fcnf & \lr\fcnf & \rr\fcnf & \lrr\fcnf & \lr\fcnb &
\rr\fctb & \lr\fcnb & \fcnb & \fctb & \rr\fctb & \lr\fcnb & \rr\fctb &
\lrr\fctf & \lr\fctf & \rr\fctf & \lrr\fctf\\ \hline
\fcfn & \fcnn & \lr\fcff & \rr\fcnf & \fcfn & \lr\fcnt & \rr\fctt &
\lr\fcfb & \fcnb & \fcbb & \rr\fctb & \lr\fcft & \rr\fcbt & \fctn &
\lr\fcbf & \rr\fctf & \fcbn\\ \hline
\lrr\fcnt & \lrr\fcnt & \lr\fcnb & \rr\fcnb & \lrr\fcnt & \lr\fcnt & \rr\fctt & \lr\fcnb & \fcnb & \fctb & \rr\fctb & \lr\fcnt & \rr\fctt & \lrr\fctt & \lr\fctb & \rr\fctb & \lrr\fctt\\
\lrr\fctt & \lrr\fctt & \lr\fctb & \rr\fctb & \lrr\fctt & \lr\fctt &
\rr\fctt & \lr\fctb & \fctb & \fctb & \rr\fctb & \lr\fctt & \rr\fctt &
\lrr\fctt & \lr\fctb & \rr\fctb & \lrr\fctt\\ \hline
\lrr\fcfb & \lrr\fcnb & \lr\fcfb & \rr\fcnb & \lrr\fcfb & \lr\fcnb & \rr\fctb & \lr\fcfb & \fcnb & \fcbb & \rr\fctb & \lr\fcfb & \rr\fcbb & \lrr\fctb & \lr\fcbb & \rr\fctb & \lrr\fcbb\\
\lrr\fcnb & \lrr\fcnb & \lr\fcnb & \rr\fcnb & \lrr\fcnb & \lr\fcnb & \rr\fctb & \lr\fcnb & \fcnb & \fctb & \rr\fctb & \lr\fcnb & \rr\fctb & \lrr\fctb & \lr\fctb & \rr\fctb & \lrr\fctb\\
\lrr\fcbb & \lrr\fctb & \lr\fcbb & \rr\fctb & \lrr\fcbb & \lr\fctb & \rr\fctb & \lr\fcbb & \fctb & \fcbb & \rr\fctb & \lr\fcbb & \rr\fcbb & \lrr\fctb & \lr\fcbb & \rr\fctb & \lrr\fcbb\\
\lrr\fctb & \lrr\fctb & \lr\fctb & \rr\fctb & \lrr\fctb & \lr\fctb & \rr\fctb & \lr\fctb & \fctb & \fctb & \rr\fctb & \lr\fctb & \rr\fctb & \lrr\fctb & \lr\fctb & \rr\fctb & \lrr\fctb\\\hline
\lrr\fcft & \lrr\fcnt & \lr\fcfb & \rr\fcnb & \lrr\fcft & \lr\fcnt & \rr\fctt & \lr\fcfb & \fcnb & \fcbb & \rr\fctb & \lr\fcft & \rr\fcbt & \lrr\fctt & \lr\fcbb & \rr\fctb & \lrr\fcbt\\
\lrr\fcbt & \lrr\fctt & \lr\fcbb & \rr\fctb & \lrr\fcbt & \lr\fctt & \rr\fctt & \lr\fcbb & \fctb & \fcbb & \rr\fctb & \lr\fcbt & \rr\fcbt & \lrr\fctt & \lr\fcbb & \rr\fctb & \lrr\fcbt\\\hline
\fctn & \fctn & \lr\fctf & \rr\fctf & \fctn & \lr\fctt & \rr\fctt & \lr\fctb & \fctb & \fctb & \rr\fctb & \lr\fctt & \rr\fctt & \fctn & \lr\fctf & \rr\fctf & \fctn\\\hline
\lrr\fcbf & \lrr\fctf & \lr\fcbf & \rr\fctf & \lrr\fcbf & \lr\fctb & \rr\fctb & \lr\fcbb & \fctb & \fcbb & \rr\fctb & \lr\fcbb & \rr\fcbb & \lrr\fctf & \lr\fcbf & \rr\fctf & \lrr\fcbf\\
\lrr\fctf & \lrr\fctf & \lr\fctf & \rr\fctf & \lrr\fctf & \lr\fctb & \rr\fctb & \lr\fctb & \fctb & \fctb & \rr\fctb & \lr\fctb & \rr\fctb & \lrr\fctf & \lr\fctf & \rr\fctf & \lrr\fctf\\\hline
\fcbn & \fctn & \lr\fcbf & \rr\fctf & \fcbn & \lr\fctt & \rr\fctt & \lr\fcbb & \fctb & \fcbb & \rr\fctb & \lr\fcbt & \rr\fcbt & \fctn & \lr\fcbf & \rr\fctf & \fcbn\\
\cline{3-4}\cline{6-7}\cline{8-11}\cline{12-13}\cline{15-16}
\end{array}\]
\caption{$\lor$ in \NC{} and \FC}
\label{table2}
\end{table}

\begin{table}
\[\begin{array}{c|c@{\,}c@{\,}c@{\,}c@{\,}c@{\,}c@{\,}c@{\,}c@{\,}c}\\
\land^{\FC/{\equiv}} & \acNn & \acFNf & \acFn & \acNTt & \acBFNTb & \acBtFt & \acTn & \acBTf & \acBn\\ \hline
\acNn & \acNn & \acFNf & \acFn & \acNTt & \acBFNTb & \acBtFt & \acNn & \acFNf & \acFn\\
\acFNf & \acFNf & \acFNf & \acFNf & \acBFNTb & \acBFNTb & \acBFNTb & \acFNf & \acFNf & \acFNf\\
\acFn & \acFn & \acFNf & \acFn & \acBtFt & \acBFNTb & \acBtFt & \acFn & \acFNf & \acFn\\
\acNTt & \acNTt & \acBFNTb & \acBtFt & \acNTt & \acBFNTb & \acBtFt & \acNTt & \acBFNTb & \acBtFt\\
\acBFNTb & \acBFNTb & \acBFNTb & \acBFNTb & \acBFNTb & \acBFNTb & \acBFNTb & \acBFNTb & \acBFNTb & \acBFNTb\\
\acBtFt & \acBtFt & \acBFNTb & \acBtFt & \acBtFt & \acBFNTb & \acBtFt & \acBtFt & \acBFNTb & \acBtFt\\
\acTn & \acNn & \acFNf & \acFn & \acNTt & \acBFNTb & \acBtFt & \acTn & \acBTf & \acBn\\
\acBTf & \acFNf & \acFNf & \acFNf & \acBFNTb & \acBFNTb & \acBFNTb & \acBTf & \acBTf & \acBTf\\
\acBn & \acFn & \acFNf & \acFn & \acBtFt & \acBFNTb & \acBtFt & \acBn & \acBTf & \acBn\\
\end{array}\]
\[\begin{array}{c|c@{\,}c@{\,}c@{\,}c@{\,}c@{\,}c@{\,}c@{\,}c@{\,}c}\\
\lor^{\FC/{\equiv}} & \acNn & \acFNf & \acFn & \acNTt & \acBFNTb & \acBtFt & \acTn & \acBTf & \acBn\\ \hline
\acNn & \acNn & \acFNf & \acNn & \acNTt & \acBFNTb & \acNTt & \acTn & \acBTf & \acTn\\
\acFNf & \acFNf & \acFNf & \acFNf & \acBFNTb & \acBFNTb & \acBFNTb & \acBTf & \acBTf & \acBTf\\
\acFn & \acNn & \acFNf & \acFn & \acNTt & \acBFNTb & \acBtFt & \acTn & \acBTf & \acBn\\
\acNTt & \acNTt & \acBFNTb & \acNTt & \acNTt & \acBFNTb & \acNTt & \acNTt & \acBFNTb & \acNTt\\
\acBFNTb & \acBFNTb & \acBFNTb & \acBFNTb & \acBFNTb & \acBFNTb & \acBFNTb & \acBFNTb & \acBFNTb & \acBFNTb\\
\acBtFt & \acNTt & \acBFNTb & \acBtFt & \acNTt & \acBFNTb & \acBtFt & \acNTt & \acBFNTb & \acBtFt\\
\acTn & \acTn & \acBTf & \acTn & \acNTt & \acBFNTb & \acNTt & \acTn & \acBTf & \acTn\\
\acBTf & \acBTf & \acBTf & \acBTf & \acBFNTb & \acBFNTb & \acBFNTb & \acBTf & \acBTf & \acBTf\\
\acBn & \acTn & \acBTf & \acBn & \acNTt & \acBFNTb & \acBtFt & \acTn & \acBTf & \acBn\\
\end{array}\]
\caption{$\land$ and $\lor$ in $\FC/{\equiv}$}
\label{table3}
\end{table}

\section{Complexity}

The questions of whether a matrix is an epimorphic image of another,
and whether two matrices are isomorphic, are computationally
non-trivial to answer.  The number of different bijective mappings
between two $n$-valued matrices equals the number of possible
permutations of the $n$~truth values, i.e.,~$n!$.  However, since a
potential isomorphism has to respect the designated truth values, the
number of maps to be checked is actually ``only'' $k!(n-k)!$ where $k$
is the number of designated values. In our case, this amounts to
$3!\times 6! = 4{,}320$ potential isomorphisms; already too large to
be checked manually by brute force.  To find a candidate epimorphic
image in the first place, an exhaustive search would have to enumerate
all potential partitions and check if they are congruences. The number
of partitions of a size~$k$ set is $B_k$, the $k$-th Bell number.
Since congruences have to again respect the designated values, the
number of potential congruences to be checked can be reduced to
$B_kB_{n-k}$ (for an $n$-valued matrix with $k$ designated values). In
the case of~\FC, there are $15 \times 4{,}213{,}597 = 63{,}203{,}955$
potential congruences. Thus, finding all epimorphic images of~\FC,
and verifying that \NC{} is one, is impossible to do by brute force
without the help of a computer.

Note that the approach taken is already much better than the
completely naive approach of checking every surjective function from
$\mathit{FDE} \times \mathit{AC}_2$ to $\mathit{NC}$ for whether it is
a homomorphism. First of all, this would require the verification of
many more candidate mappings. The number of different surjective
functions from an $n$-element set to one of size~$m$ is $m!S(n,m)$
(where $S(n,m)$ are the Stirling numbers of the second kind). Since
any epimorphism must respect designated values, we can restrict the
possible values for designated and non-designated arguments. The total
number of candidates in our case would be $6! S(12,6) \times 3!
S(4,3)$, or over 34 billion. Our approach also provides more
information: since \FC{} has no congruence with fewer than
$9$~classes, there can be no smaller common factor of both \NC{}
and~\FC. This cannot be ruled out a priori, and would have yielded an
interesting result---a matrix with the same consequence relation as
\NC{} and~\FC, but fewer truth values than either. Merely checking if
an epimorphism from \FC{} to \NC{} exists would not have settled that
question.

It is in practice not necessary to run through the entirety of all
partitions or all bijections to find congruences and isomorphisms,
respectively.  A simple idea will cut down the search space to
manageable size.  Here's the idea for isomorphisms: call an injection
$f\colon U \to V'$ (where $U \subseteq V$, and $V$, $V'$ are the truth
value sets of \L{} and~$\L'$, respectively) a \emph{partial
isomorphism} if it respects operations, i.e., if $\Box^{\L'}(f(v_1),
\dots, f(v_n)) = f(\Box^\L(v_1, \dots, v_n))$ provided $v_i$ and
$\Box(v_1, \dots, v_n) \in \dom(f)$.\footnote{It must also respect
designated values, but we can simply split the bijection into a
bijection between designated values and one between undesignated
values as before.} A bijective $f\colon V \to V'$ is an isomorphism
iff $f \upharpoonright U$ is a partial isomorphism for all $U \subseteq V$. So to
find an isomorphism we can proceed as follows: Set $f_0 = \emptyset$
and let $f_{i+1} = f_i \cup (v,v')$ where $v \in V\setminus \dom(f_i)$
and $v' \in V'\setminus \ran(f_i)$. If $f_i$ is not a partial
isomorphism, no expansion of~$f_i$ can be an isomorphism; in that
case, backtrack and pick a different pair $(v, v')$. In other words,
instead of generating and testing all isomorphisms, generate
isomorphisms one value at a time. If a particular choice of value
results in a conflict with the truth tables, it is guaranteed that no
expansion of that sequence of choices to a total bijection is an
isomorphism.  A similar idea can be used to speed up the search for
congruences.  Nevertheless, even if there are just a handful of
partial isomorphisms to test, this is intractable by hand. For
instance, verifying that \NC{} with its modest $9$~truth values and
three operations has no nontrivial automorphisms requires checking
$27$ partial isomorphisms and computing almost $1{,}400$ individual
operations.

\section{Application of MUltlog}

Finding the congruences of~\FC, i.e., its potential factor matrices,
and the verification that $\FC/{\equiv}$ is isomorphic to~\NC, was
accomplished using the MUltlog system due to
\citet{Salzer1996a}.\footnote{The MUltlog software is available at
\href{https://logic.at/multlog}{logic.at/multlog}. Results reported in
this paper were generated on version 1.16a \citep{Salzer2022}. The
supplementary code used is available at
\href{https://github.com/rzach/ncac}{github.com/rzach/ncac}
and is archived with \DOI{https://doi.org/}{10.5281/zenodo.6893328}.}  MUltlog was
originally designed to compute optimized $n$-sided sequent calculus
rules for arbitrary $n$-valued matrices using the methods of
\cite{BaazFermullerZach1994} and \cite{Salzer2000}.  As of version
v1.5, MUltlog added interactive functionality, including: evaluating
formulas in a matrix, testing for and finding tautologies of matrices,
defining products and factors of matrices, finding congruences of
matrices, and checking for isomorphism between
matrices.\footnote{Implementation of these features was carried out,
for the most part, by the author.} The solution of Fine's question
presented above was found by:
\begin{enumerate}
    \item Specifying the matrices for \NC, \FDE, and \ACT{} in
    MUltlog's format;\footnote{See appendix~\ref{lgcs}.}
    \item Defining the product \FC{};
    \item Finding the epimorphism from \FC{} to~\NC{}.
\end{enumerate}
Only the specification of \NC{} required substantive intervention.
MUltlog provided the basis for the \LaTeX{} code of the truth tables
with matching orders of truth values and matching
colors.\footnote{Finding the epimorphism between \FC{} and~\NC{}
required less than a second. Checking all potential congruences
of~\FC{} took about 20 minutes. So had we not had the candidate
matrix~\NC{} available, we could have found it automatically. The
computations were carried out on an Asus Zenbook 14 (Intel Core
i7--8565U CPU, 16GB memory), running Ubuntu Linux 21.10 and SWI-Prolog
8.2.4.} The algorithms used are still relatively naive.  Much more
efficient algorithms exist to solve questions like this, e.g., see
\cite{Freese2008} for a fast algorithm to find a minimal congruence of
a finite algebra.  The naive methods, however, have the advantage of
being straightforwardly implemented in Prolog, and an extension of
MUltlog provides a convenient way to operate on finite-valued
matrices.

MUltlog's original purpose, as mentioned, is to compute inference
rules for $n$-valued logics.  It does this not just for propositional
operators but also for so-called distribution quantifiers. Any
associative, commutative, idempotent binary operation induces such a
quantifier. E.g., if in a given interpretation, the formula $A(x)$
takes all and only values in $\{v_1, \dots, v_n\}$ then the induced
quantifier $\forall x\,A(x)$ of, say, $\land$, takes the value $v_1
\land v_2 \land \dots \land v_n$. Since \NC's $\land$ and $\lor$ are
associative, commutative, and idemptotent, \NC{} has universal and
existential quantifiers that generalize these connectives. Their truth
tables are unwieldy, as they list the value of $\forall x\,A(x)$ for
all potential distributions of $A(x)$, i.e., all $2^9-1 = 511$
possible non-empty subsets of~$\mathit{NC}$.\footnote{All but 113 of
which result in the value $\ncu\ncu$.} MUltlog nevertheless finds
optimal inference rules in minutes.  The tableaux calculus for \NC{}
generated by MUltlog can be found in
appendix~\ref{sec:tableaux}.\footnote{\citet{Ferguson2021} has
recently given a much more elegant tableau system for~\NC. }

\section{The mystery of the 7-valued matrix}

\citet[p.~223]{Fine2016} suggested that ``The 16 values [of \FC] can,
in fact, be reduced to 7 since we may just differentiate one
designated value, $\fdeb$ or $\fdet$, into four values, when paired
with the values $\acb$, $\act$, $\acf$, or $\acn$.''  It is not
perfectly clear what he has in mind here, but one way of making it
precise is this: If we let $V$ be either $\fdeb$ or $\fdet$, then we
can differentiate $V$ into the four values $V\acb$, $V\act$, $V\acf$,
and~$V\acn$. The remaining three truth values in $\{\fdeb, \fdet,
\fdef, \fden\} \setminus \{V\}$ would then be paired with a fixed
value $v \in \{\acb, \act, \acf, \acn\}$, resulting in 7~values
altogether. In other words, the truth values~$\mathit{FC}_{Vv}$ of
$\FC_{Vv}$ are
\[
\mathit{FC}_{Vv} = (\{V\} \times \mathit{AC}_2) \cup ((\mathit{FDE}
\setminus \{V\}) \times \{v\}).
\]
The truth functions of the corresponding matrix $\FC_{Vv}$ would be
given by
\begin{align*}
   \lnot^{\FC_{Vv}}(\langle v_0, v_1\rangle) &= h_{Vv}(\langle\lnot^\FDE(v_0), \lnot^\ACT(v_1)\rangle)\\
   \land^{\FC_{Vv}}(\langle v_0, v_1\rangle, \langle v_0', v_1'\rangle) & = h_{Vv}(\langle\land^\FDE(v_0, v_0'), \land^\ACT(v_1, v_1')\rangle)\\
   \lor^{\FC_{Vv}}(\langle v_0, v_1\rangle, \langle v_0', v_1'\rangle) & = h_{Vv}(\langle\lor^\FDE(v_0, v_0'), \lor^\ACT(v_1, v_1')\rangle)
\intertext{where the function $h_{Vv}$ guarantees that the result is in the corresponding set of truth values, by letting}
h_{Vv}(\langle v_0, v_1\rangle) &= \begin{cases}
   \langle v_0, v_1\rangle & \text{if $v_0 = V$}\\
   \langle v_0, v\rangle & \text{otherwise.}
\end{cases}
\end{align*}
In other words, $\FC_{Vv}$ is like $\FC$, except that only $V$ is
paired with all four truth values of~$\ACT$, and the remaining truth
values of $\FDE$ are each paired with the single value~$v$ of~$\ACT$.
The value $v$ is ``dominant,'' in the sense that if the value of one
of the truth functions of $\FC$ would yield a value $\langle v_0,
v_1\rangle$ where $v_0 \neq V$ and $v_1 \neq v$, then the value taken
is $\langle v_0, v\rangle$ instead of $\langle v_0, v_1\rangle$.

This leaves the question of which truth values to designate
in~$\FC_{Vv}$. It is most natural to let $\mathit{FC}_{Vv}^+ =
\mathit{FC}^+ \cap \mathit{FC}_{Vv}$, i.e., a truth value is
designated in~$\FC_{Vv}$ iff it is designated in~$\FC$. This gives us
eight different matrices, one for each of the possible ways of
combining either $\fdeb$ or~$\fdet$ with one of the four values
of~$\ACT$.

Let $V' = \fdeb$ if $V = \fdet$, and $= \fdet$ if $V = \fdeb$ (so, the
designated value of~$\FDE$ that's not $V$).  If $v \in
\mathit{AC}_2^+$, i.e., if $v = \acf$ or $= \acn$, then $V'v$ is
designated in $\FC_{Vv}$, and otherwise it is not. But $V'v$ has to
play the  role of \emph{all} the pairs $V'u$ with $u \in
\mathit{AC}_2$. So we should consider designating $V'v$ even when $v
\notin \mathit{AC}_2^+$, and also consider \emph{not} designating it
when $v \in \mathit{AC}^+$. So, let the matrix $\FC_{Vv}^*$ be just
like $\FC_{Vv}$ except that $V'v$ is designated if $v = \act$ or $=
\acb$, or undesignated if $v = \acf$ or $= \acn$.  For instance, in
$\FC_{\fdet\acb}$, the truth value $\fdeb\acb$ is not designated
(since $\acb \notin \mathit{AC}^+$). But, e.g, the designated value
$\fdeb\acf \in \mathit{FC}^+$ doesn't exist in $\FC_{\fdet\acb}$. If
$\fdeb\acb$ has to play its role in $\FC_{\fdet\acb}$, it would have
to be designated after all. So in $\FC_{\fdet\acb}^*$, the designated
values are $\fdet\acf$, $\fdet\acn$, and $\fdeb\acb$.  Conversely, if
$v \in \mathit{AC}_2^+$, the value $V'v$ is designated in~$\FC_{Vv}$.
We can choose to not designate it, since it has to play the role of
all values~$V'u$. E.g., the designated values of $\FC^*_{\fdet\acf}$
are $\fdet\acf$, $\fdet\acn$, but not $\fdeb\acf$. In general, if $v
\in \mathit{AC}^+$, then $\FC_{Vv}$ has three designated values while
$\FC_{Vv}^*$ has two; otherwise $\FC_{Vv}$ has two designated values
and $\FC_{Vv}^*$ has three. This results in another eight matrices.

For all of the resulting logics other than the four corresponding to
$\fdeb\act$ and~$\fdeb\acb$, already one of the axioms of~\AC{} given
in \citet{Ferguson2016} fails. For instance, in $\FC_{\fdet\acf}^*$,
$\fdet\acf$ is designated but $\lnot\lnot \fdet\acf = \fdet\act$ is
not, so $A \nvDash \lnot\lnot A$ and thus axiom AC1a fails. 

The 7-valued logics $\FC_{\fdeb\acb}$, $\FC_{\fdeb\acb}^*$,
$\FC_{\fdeb\act}$, and $\FC_{\fdeb\act}^*$, however, are sound for
those axioms. To show that here the reverse inclusions do not hold, we
have to find examples of entailments $\varphi \models \psi$ that hold in
the logics considered but not in~$\AC$, and this can be done by
finding countervaluations in~$\NC$. For $V = \fdeb$ and $v = \act$ or
$v = \acb$ we have $A \lor B \vDash_{\FC_{Vv}} B$ and $A \land (B \lor
\lnot A) \vDash_{\FC_{Vv}^*} B$. Neither consequence holds in~\NC. For
the first, observe that $\nct\ncf \lor \ncf\ncf = \nct\ncf$ is
designated in~$\NC$, but $\ncf\ncf$ is not. For the second, we have
$\nct\nct \land (\ncf\ncf \lor \lnot \nct\nct) = \nct\nct$ is
designated in~\NC, but $\ncf\ncf$, again, isn't.\footnote{These
counterexamples were found by MUltlog as well. $\AC_2$ happens to be
an epimorphic image of~$\FC_{\fdeb\acb}$. No two of the resulting
logics are isomorphic, and other than $\FC_{\fdeb\acb}$, none of them
have epimorphisms to \FDE{} or~$\AC_2$.} In short, none of the sixteen
potential 7-valued matrices in fact characterize~\AC.

Whether a matrix with fewer than \NC's nine truth values
characterizes~\AC{} remains an open question.  Fine's suggested
7-valued matrices do not. If a smaller matrix exists, it will have to
be proved to be adequate by independent methods, e.g., by a direct
soundness and completeness proof (they cannot be epimorphic images of
\NC{} or~\FC, as these have no congruences with fewer than nine
classes).

\bigskip
\noindent{\bseries Acknowledgments.} I would like to thank an
anonymous referee for their helpful suggestions.


\bigskip

\renewcommand{\bibfont}{\small}
\setlength{\bibsep}{6pt}

\AuthorAdressEmail{Richard Zach}{Department of Philosophy\\ University
of Calgary\\
Calgary, Canada}{rzach@ucalgary.ca\\richardzach.org}

\appendix
\section{MUltlog specification files}\label{lgcs}

\subsection*{Specification of \NC}
\lstinputlisting[basicstyle=\footnotesize\ttfamily,breaklines]{nc.lgc}
\subsection*{Specification of \FDE}
\lstinputlisting[basicstyle=\footnotesize\ttfamily,breaklines]{fde.lgc}
\subsection*{Specification of \ACT}
\lstinputlisting[basicstyle=\footnotesize\ttfamily,breaklines]{ac2.lgc}

\section{Tableaux for first-order \NC}\label{sec:tableaux}

\begin{definition}
   A \emph{signed formula} is an expression of the form $v \colon A$
   where $v \in \mathit{NC}$ and $A$ is a formula.
\end{definition}

\begin{definition}
   A \emph{tableau} for a set of signed formulas $\Delta$ is a
   downward rooted tree of signed formulas where each one is either an
   element of $\Delta$ or results from a signed formula in the branch
   above it by a branch expansion rule. A tableau is \emph{closed} if
   every branch contains, for some formula $A$, the signed formulas $v
   \colon A$ for all $v \in \mathit{NC}$.
\end{definition}

\FOR \op=1 \TO \NoOps \DO
   {\noindent
    The branch expansion rules for connective $\Opname\op$ are given by
    \begin{rules}
       \FOR \tv=1 \TO \NoTVs \DO%
       \ifthenelse{\equal{\TabOpconcl{\op}{\tv}}{}}{}{%
          \setrule{%
            $\begin{array}[t]{@{}c@{}}
              \TabOpprem{\op}{\tv} 
              \\ \hline
              \TabOpconcl{\op}{\tv} 
               \end{array}$}%
      }%
       \ENDFOR
    \end{rules}%
   }
\ENDFOR
\FOR \qu=1 \TO \NoQus \DO
   {\noindent
    The branch expansion rules rules for quantifier $\Quname\qu$ are given by
    \begin{rules}
       \FOR \tv=1 \TO \NoTVs \DO
          \setrule{$\begin{array}[t]{@{}c@{}}
              \TabQuprem{\qu}{\tv} \\ \hline
              \TabQuconcl{\qu}{\tv}
            \end{array}$}%
       \ENDFOR
    \end{rules}%
   }
\ENDFOR

\begin{definition}
   An interpretation $\I$ \emph{satisfies} a signed formula $v\colon
   A$ iff $\val_\I(A) \neq v$. A set of signed formulas is
   \emph{satisfiable} if some interpretation~$\I$ satisfies all signed
   formulas in it.
\end{definition}

\begin{theorem}
   A set of signed formulas is unsatisfiable iff it has a closed tableau.
\end{theorem}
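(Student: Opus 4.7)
The plan is to prove the two directions separately, in the standard form for a MUltlog-generated many-valued tableau calculus.

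For soundness---if $\Delta$ has a closed tableau then $\Delta$ is unsatisfiable---I would argue the contrapositive. The key one-step lemma is that every branch expansion rule preserves satisfiability along some branch: if an interpretation $\I$ satisfies every signed formula in the rule's premise, then $\I$ satisfies every signed formula on at least one of its conclusion branches. This is immediate from the way the rules are generated: the $v$-rule for a connective $\Box$ enumerates, via a disjunctive normal form of the negation of the truth table, all minimal conditions $v_1 : A_1, \dots, v_n : A_n$ (that is, $\val_\I(A_i) \neq v_i$) that together force $\val_\I(\Box(A_1,\dots,A_n)) \neq v$. A straightforward induction on the construction of a tableau then shows that some branch is satisfied by any $\I$ that satisfies $\Delta$. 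But a closed branch contains $v : A$ for every $v \in \mathit{NC}$, requiring $\val_\I(A)$ to differ from all nine truth values of~\NC{} at once, which is impossible. Hence no satisfiable set has a closed tableau.

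For completeness---if $\Delta$ is unsatisfiable then $\Delta$ has a closed tableau---I would use a Hintikka-style canonical model. Build a systematic, fair tableau in which every applicable propositional rule is eventually fired and every closed term from a Henkin extension of the language is eventually used in each quantifier rule. If no branch closes, fix an open branch $B$ and let $H$ be the set of signed formulas on it. Define $\I$ by choosing, for each atomic $P(\bar t)$, a value $\val_\I(P(\bar t)) \in \mathit{NC} \setminus \{v : v{:}P(\bar t) \in H\}$ (nonempty because $B$ is open), and extend to compound formulas via the truth functions of \NC. The main induction shows that $\val_\I(A) \neq v$ whenever $v : A \in H$: saturation plus the duality between the tableau rules and the truth tables guarantees that if $\val_\I(A)$ coincided with some $v$ with $v{:}A \in H$, one of the branches of the rule expanding $v{:}A$ would have added signed formulas on $B$ that contradict the induction hypothesis on the immediate subformulas. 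Consequently $\I$ satisfies $\Delta$, contradicting unsatisfiability.

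The main obstacle is the quantifier case. Because $\forall$ and $\exists$ in \NC{} are distribution quantifiers induced by $\land$ and $\lor$, the signed formula $v : \forall x\, A(x)$ is governed by the value of $\bigwedge^\NC \{\val_\I(A(t)) : t \text{ a closed term}\}$, and MUltlog accordingly emits one rule-branch per nonempty subset of~$\mathit{NC}$ whose conjunction is $\neq v$---on the order of $2^9-1$ branches. To make the Hintikka step go through I must argue that, for any open saturated branch, the witness terms enumerated by the fairness condition actually exhibit the full distribution of values $\I$ assigns to the instances of $A$: a witness term realizing each value taken by $\I$, and universal instantiation by closed terms ruling out any value not taken. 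This is a careful but routine bookkeeping argument, matching the set $\{\val_\I(A(t)) : t\}$ to one of the disjuncts in the rule's expansion and then closing the induction. Soundness and the propositional completeness step are standard many-valued tableau theory in the style of \citet{Hahnle1993}.
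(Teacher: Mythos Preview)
Your argument is correct and is essentially the content of the paper's proof, which simply observes that the sign $v\colon A$ is H\"ahnle's set-valued sign $\mathsf{S}\,A$ with $\mathsf{S}=\mathit{NC}\setminus\{v\}$ and then invokes Theorems~4.14 and~4.21 of \citet{Hahnle1993}. You have spelled out in detail the standard soundness and Hintikka-style completeness argument that those theorems encapsulate; the paper just cites them.
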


\begin{proof}
   Apply Theorems 4.14 and 4.21 of \citet{Hahnle1993}; interpreting $v
   \colon A$ as $\textsf{S}\ A$ where $\mathsf{S} = \mathit{NC} \setminus
   \{v\}$.
\end{proof}

\begin{corollary}
   In \NC, $\Delta \vDash A$ iff $\{v \colon B \mid v \in
   \mathit{NC}\setminus\mathit{NC}^+, B \in \Delta\} \cup \{v \colon A
   \mid v \in \mathit{NC}^+\}$ has a closed tableau.
\end{corollary}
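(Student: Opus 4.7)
The plan is to reduce the claim to a straightforward application of the preceding theorem by unfolding the definition of satisfaction for signed formulas. The key observation is that, because $\I$ satisfies $v\colon B$ iff $\val_\I(B) \neq v$, bundling together the signed formulas $v \colon B$ for \emph{all} $v$ in some set $S \subseteq \mathit{NC}$ is equivalent to demanding $\val_\I(B) \notin S$.

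First I would apply this observation to each half of the signed set in the corollary. The portion $\{v \colon B \mid v \in \mathit{NC}\setminus\mathit{NC}^+, B \in \Delta\}$ is satisfied by $\I$ iff $\val_\I(B) \notin \mathit{NC}\setminus \mathit{NC}^+$ for every $B \in \Delta$, equivalently iff $\val_\I(B) \in \mathit{NC}^+$ for every $B \in \Delta$, i.e., $\I$ designates all of $\Delta$. The portion $\{v \colon A \mid v \in \mathit{NC}^+\}$ is satisfied by $\I$ iff $\val_\I(A) \notin \mathit{NC}^+$, i.e., $\I$ leaves $A$ undesignated. Hence the full union is satisfiable iff there exists an $\I$ that designates every formula in $\Delta$ while failing to designate $A$---which, by the standard definition of consequence via preservation of designated values (used already in item~5 of section~2), is precisely the failure of $\Delta \vDash A$.

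To close the loop, I would then invoke the preceding theorem: the signed set is unsatisfiable iff it has a closed tableau. Combining this with the equivalence from the previous paragraph gives that the set has a closed tableau iff no counterexample valuation exists, iff $\Delta \vDash A$. There is no substantive obstacle; the only point requiring care is keeping the negation in the definition of satisfaction ($\val_\I(B) \neq v$) aligned with the complement $\mathit{NC}\setminus \mathit{NC}^+$ appearing on the premise side, so that the two negations cancel into the expected positive requirement that $\I$ designate every $B \in \Delta$, while the single negation on the conclusion side produces the undesignation of $A$.
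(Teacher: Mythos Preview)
Your argument is correct and is exactly the unpacking the paper has in mind: the corollary is stated without proof, as an immediate consequence of the theorem, and your reduction---reading the bundle of signs $\{v\colon B : v\in S\}$ as the single constraint $\val_\I(B)\notin S$ and then matching this against preservation of designated values---is precisely the intended derivation.
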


\end{document}